\theoremstyle{plain}
\newtheorem{thm}{Theorem}[section]
\newtheorem{lem}[thm]{Lemma}
\newtheorem*{thmA}{Theorem A}
\newtheorem*{question}{Question}
\theoremstyle{definition}
\newtheorem{dfn}[thm]{Definition}
\newtheorem{rmk}[thm]{Remark}
\begin{document}

\title
{A focal subgroup theorem for outer commutator words}

\author[C. Acciarri]{Cristina Acciarri}
\address{Department of Mathematics\\ University of Brasilia
\\ Brasilia-DF\\ 70910-900 Brazil}
\email{acciarricristina@yahoo.it}

\author[G.A. Fern\'andez-Alcober]{Gustavo A. Fern\'andez-Alcober}
\address{Matematika Saila\\ Euskal Herriko Unibertsitatea
\\ 48080 Bilbao\\ Spain}
\email{gustavo.fernandez@ehu.es}

\author[P. Shumyatsky]{Pavel Shumyatsky}
\address{Department of Mathematics\\ University of Brasilia
\\ Brasilia-DF\\ 70910-900 Brazil}
\email{pavel@unb.br}

\thanks{The second author is supported by the Spanish Government, grant
MTM2008-06680-C02-02, partly with FEDER funds, and by the
Basque Government, grant IT-460-10.
The third author is supported by CNPq-Brazil.}

\begin{abstract}
Let $G$ be a finite group of order $p^am$, where $p$ is a prime and
$m$ is not divisible by $p$, and let $P$ be a Sylow $p$-subgroup of $G$.
If $w$ is an outer commutator word, we prove that $P\cap w(G)$ is
generated by the intersection of $P$ with the set of $m$th powers
of all values of $w$ in $G$.
\end{abstract}

\keywords{Sylow subgroups, verbal subgroups, focal subgroups, outer commutator words}
\subjclass[2010]{Primary 20D20; secondary 20F12}
\maketitle

Let $G$ be a finite group  and $P$  a Sylow $p$-subgroup of $G$.
The Focal Subgroup Theorem states that $P\cap G'$ is generated by the set of commutators $\{ [g,z] \mid g\in G,\ z\in P,\ [g,z]\in P \}$. This was proved by Higman in 1953 \cite{higman}. Nowadays the proof of the theorem can be found in many standard books on group theory (for example, Rose's book \cite{rose} or Gorenstein's \cite{gore}).

One immediate corollary is that $P\cap G'$ can be generated by commutators  lying in $P$. Of course, $G'$ is the verbal subgroup of $G$ corresponding to the group word $[x,y]=x^{-1}y^{-1}xy$. It is natural to ask the question on generation of Sylow subgroups  for other words. More specifically, if $w$ is a group word  we write $G_w$ for the set of values of $w$ in $G$ and $w(G)$ for the subgroup generated by $G_w$ (which is called the \emph{verbal subgroup\/} of $w$ in $G$), and  one is tempted to ask the following question.
\begin{question}
Given  a Sylow $p$-subgroup $P$ of a finite group $G$, is it true that
$P\cap w(G)$ can be generated by $w$-values lying in $P$, i.e., that $P\cap w(G)=\langle P\cap G_w \rangle$?
\end{question}
However considering the case where $G$ is the non-abelian group of order six,  $w=x^3$ and $p=3$ we quickly see that the answer to the above question is negative. Therefore we concentrate on the case where $w$ is a \emph{commutator} word. Recall that a group word is commutator if the sum of the exponents of any indeterminate involved in it is zero. Thus, we deal with the question whether $P\cap w(G)$ can be generated by $w$-values whenever $w$ is a commutator word.

The main result of this paper is a contribution towards a positive answer to this question: we prove that if $w$ is an outer commutator word, then $P\cap w(G)$ can be generated by the \emph{powers of values of $w$\/} which lie in $P$.
More precisely, we have the following result.

\begin{thmA}
Let $G$ be a finite group of order $p^am$, where $p$ is a prime and
$m$ is not divisible by $p$, and let $P$ be a Sylow $p$-subgroup of $G$.
If $w$ is an outer commutator word, then $P\cap w(G)$ is generated by $m$th powers of $w$-values, i.e., $P\cap w(G)=\langle P\cap G_{w^m} \rangle$.
\end{thmA}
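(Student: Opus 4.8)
The plan is to establish the inclusion $P\cap w(G)\subseteq\langle P\cap G_{w^m}\rangle$, the reverse one being clear since every $w^m$-value lies in $w(G)$; throughout write $Q=P\cap w(G)$ and $R=\langle P\cap G_{w^m}\rangle$. The starting point is the arithmetic observation that explains the role of the $m$-th powers: if $g\in G_w$ then $|g|$ divides $|G|=p^am$, so the $p'$-part $g_{p'}$ of $g$ has order dividing $m$; hence $g^m=g_p^m$ is a $p$-element and $\langle g^m\rangle=\langle g_p\rangle$, where $g_p$ is the $p$-part of $g$. In particular every $w^m$-value is a $p$-element, $R=\langle g_p : g\in G_w,\ g_p\in P\rangle$ is a normal subgroup of $P$, and any $w$-value lying in $P$ already lies in $R$; thus Theorem A does improve on the naive bound mentioned in the Question.

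The proof then goes by induction on $|G|$. Let $N$ be a minimal normal subgroup of $G$ and apply the inductive hypothesis in $G/N$, whose Sylow $p$-subgroup is $PN/N$ and whose $w$-verbal subgroup is $w(G)N/N$. Since $w^m$-values are $p$-elements, a $w^m$-value of $G$ lying in $PN$ is conjugate by an element of $N$ to a $w^m$-value lying in $P$; hence $(PN/N)\cap(G/N)_{w^m}$ is exactly the image of $P\cap G_{w^m}$ under the natural projection, so the right-hand side of the inductive identity in $G/N$ equals $RN/N$. If $N\le w(G)$, Dedekind's law makes the left-hand side $QN/N$, so $QN=RN$ and, using $R\le Q$, $Q=R(P\cap N)$; if $N\not\le w(G)$, then $N\cap w(G)=1$ and the same computation gives $Q=R$ directly. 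So it is enough to find one minimal normal subgroup $N$ with $P\cap N\le R$. This holds at once when $N$ is a $p'$-group, and also when $N$ is non-abelian: then $N$ is a direct product of non-abelian simple groups of order divisible by $p$, so $N_w=N$ by the known surjectivity of outer commutator words on finite non-abelian simple groups, and taking suitable powers shows that every $p$-element of $N$ is a $w^m$-value, whence $P\cap N\subseteq P\cap G_{w^m}\subseteq R$. We are thus reduced to the situation in which every minimal normal subgroup of $G$ is an elementary abelian $p$-group contained in $w(G)$; fixing such an $N$, one has $N\le P$ and $Q=RN$, and everything comes down to proving $N\le R$.

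This last reduction is the crux and, I expect, the main obstacle. As $w$ is a commutator word and $N$ is abelian there are no nontrivial $w$-values inside $N$, so the inclusion $N\le w(G)$ has to be used through commutators of $w$-values with entries outside $N$: one must write the irreducible $\mathbb{F}_p[G]$-module $N$ as a product of $m$-th powers of $w$-values lying in $P$. I would split according to whether $G$ centralizes $N$. If $N\le Z(G)$, hence $N\le Z(G)\cap w(G)$, I would work in the verbal subgroup $w^m(G)=\langle G_{w^m}\rangle$: exploiting the outer-commutator structure of $w$ one should prove that $w(G)/w^m(G)$ is a $p'$-group, so that $N\le w^m(G)$, and then extract $N\le R$ from the position of $N$ there. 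If $G$ does not centralize $N$, I would pick a $p$-regular element $a\in G$ acting nontrivially on $N$ and use the good behaviour of outer commutator words under coprime operator groups to realize a generating set of $[N,a]$ — whose $G$-conjugates generate $N$ — by $w^m$-values lying in a suitable Sylow $p$-subgroup. The points I expect to need the most care are the claim that $w(G)/w^m(G)$ is a $p'$-group and the central case, both of which rely on the multilinearity of outer commutator words.
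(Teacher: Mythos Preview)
Your reduction via a minimal normal subgroup is sound and parallels the paper's argument for the derived words $\delta_i$: the use of conjugacy of $p$-elements into $P$ is exactly Lemma~\ref{intersection mod N}, and the Dedekind step is Lemma~\ref{from N to L}. The non-abelian and $p'$-cases are also handled correctly. The difficulty is entirely in the last paragraph, and there your proposal has a genuine gap.

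First, you overlook a dichotomy that the paper exploits: since $\langle N\cap G_w\rangle$ is normal in $G$ and $N$ is minimal normal, either $N=\langle N\cap G_w\rangle$ (and then $N=\langle (N\cap G_w)^m\rangle\le R$ immediately, because $N$ is a $p$-group), or $N\cap G_w=1$. Your remark that ``there are no nontrivial $w$-values inside $N$'' is true only for values with all entries in $N$; values $w(g_1,\dots,g_k)$ with the $g_i$ ranging over $G$ can certainly land in $N$, and when they generate $N$ you are done. So only the case $N\cap G_w=1$ remains, and this is where your two suggested strategies run into trouble. The claim that $w(G)/w^m(G)$ is a $p'$-group is in fact a \emph{consequence} of Theorem~A (it says the Sylow $p$-subgroup of $w(G)$ lies in $w^m(G)$), and I do not see how multilinearity alone proves it independently; moreover, even granting $N\le w^m(G)$ you still have to descend to $N\le\langle P\cap G_{w^m}\rangle$, which is strictly smaller in general. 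In the non-central case, the existence of a $p$-regular $a$ acting nontrivially is fine, but there is no reason the elements $[n,a]$ should be $w$-values, let alone $w^m$-values in $P$, for a general outer commutator $w$.

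The paper avoids this impasse by \emph{not} inducting on $|G|$ for general $w$. It first proves the result for $\delta_i$ by your kind of induction---and there the case $N\cap G_{\delta_i}=1$ forces $[N,G^{(i-1)}]=1$ (Lemma~\ref{no deltai values}), after which one reduces to $G$ perfect and finishes the central case with Gasch\"utz's complement theorem. For an arbitrary outer commutator word the paper then argues by induction on the \emph{defect} of $w$ (the number of vertices missing from the tree of $w$ to make it the full binary tree of the same height). The key external input is Theorem~\ref{product extensions} (from Fern\'andez-Alcober--Morigi): $[w(G),w(G)]$ lies in a product $N_r=\varphi_1(G)\cdots\varphi_r(G)$ where each $\varphi_j$ is a proper extension of $w$ of the same height, hence of smaller defect. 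One climbs the chain $1=N_0\le N_1\le\cdots\le N_r$ using Lemma~\ref{from N to L} and the inductive hypothesis for the $\varphi_j$ (noting $G_{\varphi_j^m}\subseteq G_{w^m}$), and then applies the nilpotent case (Theorem~\ref{nilpotent case}) in $G/N_r$, where $w(G/N_r)$ is abelian. So the missing ingredient in your plan is precisely this structural induction on the word; a direct induction on $|G|$ does not seem to close the elementary-abelian case for general $w$.
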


Recall that an \emph{outer commutator word\/} is a word which is obtained by nesting commutators,
but using always \emph{different indeterminates\/}.
Thus the word \break $[[x_1,x_2],[x_3,x_4,x_5],x_6]$ is an outer commutator while the Engel word \break
$[x_1,x_2,x_2,x_2]$ is not.
An important family of outer commutator words are the simple commutators $\gamma_i$,
given by
\[
\gamma_1=x_1,
\qquad
\gamma_i=[\gamma_{i-1},x_i]=[x_1,\ldots,x_i],
\quad
\text{for $i\ge 2$.}
\]
The corresponding verbal subgroups $\gamma_i(G)$ are the terms of the lower central series of $G$. Another distinguished sequence of outer commutator words are the \emph{derived words\/} $\delta_i$, on $2^i$ indeterminates, which are defined recursively by
\[
\delta_0=x_1,
\qquad
\delta_i=[\delta_{i-1}(x_1,\ldots,x_{2^{i-1}}),\delta_{i-1}(x_{2^{i-1}+1},\ldots,x_{2^i})],
\quad
\text{for $i\ge 1$.}
\]
Then $\delta_i(G)=G^{(i)}$, the $i$th derived subgroup of $G$.

Some of the ideas behind the proof of Theorem A were anticipated already in \cite{gushu} where somewhat similar arguments, due to Guralnick, led to a result on generation of a Sylow $p$-subgroup of $G'$ for a finite group $G$ admitting a coprime group of automorphisms. Later the arguments were refined in \cite{accshu}. In both papers \cite{gushu} and \cite{accshu} the results on generation of Sylow subgroups were used to reduce a problem about finite groups to the case of nilpotent groups. It is hoped that also our Theorem A will play a similar role in the subsequent projects.

Another important tool used in the proof of Theorem A  is the famous result of  Liebeck, O'Brien, Shalev and Tiep \cite{LOST} that every element of a non-abelian simple group is a commutator. The result proved Ore's conjecture thus solving a long-standing problem. In turn, the proof in \cite{LOST} uses the classification of finite simple groups as well as many other sophisticated tools.

\section{Preliminaries}

If $X$ and $Y$ are two subsets of a group $G$, and $N$ is a normal
subgroup of $G$, it is not always the case that $XN\cap YN=(X\cap Y)N$,
i.e.,\ that $\overline X\cap \overline Y=\overline{X\cap Y}$ in the
quotient group $\overline G=G/N$.
In our first lemma we have a situation in which this property holds, and
which will be of importance in the sequel.

\begin{lem}
\label{intersection mod N}
Let $G$ be a finite group, and let $N$ be a normal subgroup of $G$.
If $P$ is a Sylow $p$-subgroup of $G$ and $X$ is a normal subset of $G$
consisting of $p$-elements, then $XN\cap PN=(X\cap P)N$.
In other words, if we use the bar notation in $G/N$, we have
$\overline{X}\cap \overline{P}=\overline{X\cap P}$.
\end{lem}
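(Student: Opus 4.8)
The inclusion $\overline{X\cap P}\subseteq\overline{X}\cap\overline{P}$ is trivial, so the whole content is the reverse inclusion: if $g\in G$ lies in both $XN$ and $PN$, then $gN$ meets $X\cap P$. The plan is to fix such a $g$, write $g=xn_1=yn_2$ with $x\in X$, $y\in P$, $n_1,n_2\in N$, and then \emph{adjust $x$ within its coset $xN$ by an element of $N$} so that it lands inside $P$; since $X$ is a normal subset, any $N$-conjugate of $x$ still lies in $X$, which is why we will look for the adjustment among conjugates rather than arbitrary coset representatives.

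First I would reduce to a statement about $p$-groups. The element $x$ is a $p$-element of $G$ and its image $\overline{x}=gN$ lies in the $p$-subgroup $\overline{P}$ of $\overline{G}$, so $\langle x\rangle N/N$ is a $p$-group; hence $\langle x,N\rangle=\langle x\rangle N$ has a normal Sylow $p$-subgroup complemented modulo $N$, and more to the point $\langle x\rangle$ is a $p$-subgroup of the subgroup $H:=\langle x\rangle N$ whose image is all of $\overline{H}=\overline{\langle x\rangle}$. Now $\overline P\cap\overline H$ is a Sylow $p$-subgroup of $\overline H$ (it equals $\overline H$ here, but more robustly $\overline P$ is Sylow in $\overline G$), so I can pick a Sylow $p$-subgroup $Q$ of $H$ with $\overline Q=\overline P\cap\overline H=\overline H\ni\overline x$; both $\langle x\rangle$ and $Q$ are Sylow $p$-subgroups of $H$ (their common image being $\overline H$ and their intersection with $N$ being trivial up to order considerations), so by Sylow's theorem $\langle x\rangle^h=Q$ for some $h\in H$, and in fact one can take $h\in N$ because $\overline{\langle x\rangle}=\overline Q$ already. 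Setting $x':=x^h$, we get $x'\in X$ (normality of $X$), $x'\in Q$, and $\overline{x'}=\overline x=gN$.

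It remains to move from $Q$ into $P$ itself. Here I would use that $Q$ is a $p$-subgroup of $G$ contained in $PN$ with $\overline Q\subseteq\overline P$: standard Sylow theory in $PN$ (whose Sylow $p$-subgroups are exactly the conjugates of $P$, and are all contained in it after adjusting by $N$) gives an element $t\in PN$, even $t\in N$, with $Q^t\subseteq P$. Then $x'':=(x')^t=x^{ht}$ satisfies $x''\in X\cap P$ and $\overline{x''}=\overline{x'}=gN$, so $gN\in\overline{X\cap P}$, as required.

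The main obstacle is the bookkeeping that ensures the conjugating elements can be chosen inside $N$ (equivalently, inside the relevant coset), so that the final conjugate stays in the normal set $X$ and in the original coset $gN$; this is what makes the argument work and is the step I would write out most carefully, presumably by conjugating Sylow $p$-subgroups within the subgroup $PN$ and tracking images in $G/N$ throughout.
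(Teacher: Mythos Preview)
Your approach is essentially the paper's: pass to the subgroup $H=\langle x\rangle N$ and use Sylow conjugacy there to move $x$ into $P$ while staying in the coset $xN=gN$. One slip: $\langle x\rangle$ need not be a Sylow $p$-subgroup of $H$ (the $p$-part of $N$ may well exceed $\langle x\rangle\cap N$), so you cannot write $\langle x\rangle^{h}=Q$. This is harmless, since you only need $x^{h}\in Q$ for some $h\in N$: take any Sylow $Q_{0}$ of $H$ containing $x$, and use $H=QN$ to conjugate $Q_{0}$ to $Q$ by an element of $N$.

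Where the paper's execution is tidier is that it avoids your two-stage conjugation (first into $Q$, then $Q$ into $P$) by noting that $H/N$ is cyclic, hence $H'\le N$. Thus conjugating $x$ by \emph{any} $h\in H$ automatically preserves the coset: $x^{h}=x[x,h]\in xH'\subseteq xN$. So the paper simply checks that $P\cap H=\langle z\rangle(P\cap N)$ is itself a Sylow $p$-subgroup of $H$, conjugates $x$ into it by some $h\in H$, and is done---no bookkeeping needed to force $h\in N$. Your version works, but the observation $H'\le N$ is exactly the shortcut that dissolves what you flagged as ``the main obstacle''.
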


\begin{proof}
We only need to worry about the inclusion $\overline{X}\cap \overline{P}\subseteq \overline{X\cap P}$.
So, given an element $g\in XN\cap PN$, we prove that $g\in xN$ for some
$x\in X\cap P$.
Since we have $g\in XN$, we may assume without loss of generality that $g\in X$,
and in particular $g$ is a $p$-element.
Since also $g\in PN$, there exists $z\in P$ such that $gN=zN$.

Put $H=\langle g \rangle N=\langle z \rangle N$, and observe that
$H'\le N$.
Since $P\cap N$ is a Sylow $p$-subgroup of $N$ and $z\in P$, it
follows that $P\cap H=\langle z \rangle (P\cap N)$ is a Sylow $p$-subgroup
of $H$.
Now, $g$ is a $p$-element of $H$, and so we get $g^h\in P\cap H$ for some $h\in H$.
If we put $x=g^h$ then $x\in X\cap P$, since $X$ is a normal subset of $G$,
and $g=x^{h^{-1}}=x[x,h^{-1}]\in xH'\subseteq xN$, as desired.
\end{proof}

The next lemma will be fundamental in the proof of Theorem A, since it
will allow us to go up a series from $1$ to $w(G)$ in which all quotients of
two consecutive terms are verbal subgroups of a word all of whose values are
also $w$-values.

\begin{lem}
\label{from N to L}
Let $G$ be a finite group, and let $P$ be a Sylow $p$-subgroup of $G$.
Assume that $N\le L$ are two normal subgroups of $G$, and use the
bar notation in the quotient group $G/N$. Let $X$ be a normal subset of $G$ consisting of $p$-elements such that $\overline P\cap \overline L=\langle \overline P\cap \overline X \rangle$. Then $P\cap L=\langle P\cap X, P\cap N \rangle$.
\end{lem}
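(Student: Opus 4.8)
The plan is to prove the two inclusions of the stated equality, writing $M=\langle P\cap X,\,P\cap N\rangle$ for brevity. Since $M$ is generated by subsets of $P$, clearly $M\le P$, and for the inclusion $M\subseteq P\cap L$ I would only need to verify $M\le L$: indeed $P\cap N\subseteq N\subseteq L$, while if $x\in P\cap X$ then $\overline x\in\overline P\cap\overline X\subseteq\langle\overline P\cap\overline X\rangle=\overline P\cap\overline L$, so that $x\in LN=L$. Thus this direction is immediate.

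For the reverse inclusion $P\cap L\subseteq M$, the first step is to compute the image $\overline M$ of $M$ in $G/N$. Since $P\cap N\le N$ has trivial image, $\overline M=\langle\overline{P\cap X}\rangle$. Here I would invoke Lemma \ref{intersection mod N} --- applicable because $X$ is a normal subset of $G$ consisting of $p$-elements --- to obtain $\overline{P\cap X}=\overline P\cap\overline X$, and then the hypothesis gives
\[
\overline M=\langle\overline P\cap\overline X\rangle=\overline P\cap\overline L .
\]

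The second step is a routine coset argument. Given $g\in P\cap L$, one has $\overline g\in\overline P\cap\overline L=\overline M$, hence $g\in MN$; writing $g=mn$ with $m\in M\le P$ and $n\in N$, the element $n=m^{-1}g$ lies in $P$, so $n\in P\cap N\le M$, and therefore $g=mn\in M$. This yields $P\cap L\subseteq M$ and completes the proof. The only step where any care is needed --- and the only place the hypotheses on $X$ are genuinely used --- is the application of Lemma \ref{intersection mod N} upgrading the trivial inclusion $\overline{P\cap X}\subseteq\overline P\cap\overline X$ to an equality; everything else is elementary.
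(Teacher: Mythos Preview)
Your proof is correct and follows essentially the same approach as the paper: both arguments invoke Lemma~\ref{intersection mod N} to get $\overline{P\cap X}=\overline P\cap\overline X$, deduce that $MN=PN\cap L$, and then intersect with $P$. The paper phrases the final step as a one-line application of Dedekind's law, $P\cap \langle P\cap X\rangle N=\langle P\cap X\rangle(P\cap N)$, whereas you write out that same modular-law computation explicitly as the coset argument $g=mn$ with $n=m^{-1}g\in P\cap N$.
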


\begin{proof}
By Lemma \ref{intersection mod N}, we have
$\overline P \cap \overline L=\langle \overline{P\cap X} \rangle$, and this
implies that $PN\cap L=\langle P\cap X \rangle N$.
By intersecting with $P$, we get
\[
P\cap L = P\cap (PN\cap L)
= P\cap \langle P\cap X \rangle N
= \langle P\cap X \rangle (P\cap N),
\]
where the last equality follows from Dedekind's law.
This proves the result.
\end{proof}

We will also need the following lemma, of a different nature.

\begin{lem}
\label{no deltai values}
Let $G$ be a finite group, and let $N$ be a minimal normal subgroup of $G$.
If $N$ does not contain any non-trivial elements of $G_{\delta_i}$, where $i\ge 1$,
then $[N,G^{(i-1)}]=1$.
\end{lem}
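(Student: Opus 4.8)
My plan is to argue by contradiction and by induction on $i$. Assume $N$ contains no non-trivial $\delta_i$-value but $[N,G^{(i-1)}]\ne 1$. Since $[N,G^{(i-1)}]$ is a normal subgroup of $G$ contained in the minimal normal subgroup $N$, this forces $[N,G^{(i-1)}]=N$, and the task becomes to exhibit a non-trivial $\delta_i$-value lying inside $N$, contradicting the hypothesis.

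The first step is to reduce to the case where $N$ is abelian. If $N$ were non-abelian, then (being a minimal normal subgroup) it would be a direct product of non-abelian simple groups, hence perfect, so $\delta_i(N)=N^{(i)}=N\ne 1$; as $\delta_i(N)$ is generated by the $\delta_i$-values it contains, at least one of them is non-trivial, and this is a non-trivial element of $N\cap G_{\delta_i}$, a contradiction. (Note that this uses neither $[N,G^{(i-1)}]=N$ nor Ore's conjecture: we only need one non-trivial value, not that every element of $N$ is a value.) So henceforth $N$ is an elementary abelian $q$-group for some prime $q$, and for each $v\in G$ the map $n\mapsto[n,v]$ is an endomorphism of $N$.

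The heart of the proof is a dichotomy coming from minimality of $N$ applied to the subgroup $\langle N\cap G_{\delta_{i-1}}\rangle$, which is normal in $G$ because $G_{\delta_{i-1}}$ is a normal subset of $G$ and $N\trianglelefteq G$. Either $N\cap G_{\delta_{i-1}}=\{1\}$, or $\langle N\cap G_{\delta_{i-1}}\rangle=N$. In the first case we must have $i\ge 2$ (for $i=1$ one has $N\cap G_{\delta_0}=N\ne 1$), and the inductive hypothesis, applied to $N\trianglelefteq G$ and the word $\delta_{i-1}$, yields $[N,G^{(i-2)}]=1$; since $G^{(i-1)}\le G^{(i-2)}$ this contradicts $[N,G^{(i-1)}]=N\ne 1$. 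In the second case, $N$ is generated by the $\delta_{i-1}$-values it contains; since $G^{(i-1)}=\langle G_{\delta_{i-1}}\rangle$ and $[\,\cdot\,,v]$ is an endomorphism of $N$ for each $v\in G_{\delta_{i-1}}$, the vanishing of $[m,v]$ for all $m\in N\cap G_{\delta_{i-1}}$ and all $v\in G_{\delta_{i-1}}$ would force $[N,G^{(i-1)}]=1$, again impossible. Hence there exist $m\in N\cap G_{\delta_{i-1}}$ and $v\in G_{\delta_{i-1}}$ with $[m,v]\ne 1$; as $m\in N\trianglelefteq G$ we have $[m,v]\in N$, and as $\delta_i=[\delta_{i-1},\delta_{i-1}]$ on disjoint indeterminates, $[m,v]$ is a $\delta_i$-value. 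This is the required contradiction, and the base case $i=1$ is covered since there only the second alternative can occur.

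I do not anticipate a genuine obstacle once the induction on $i$ is in place: the remaining points are routine, namely the elementary commutator-collection identities (valid because $N$ is abelian and normal in $G$) used to pass from the triviality of $[m,v]$ on a generating set of $N$ to the triviality of $[N,G^{(i-1)}]$, and the standard structure theorem for minimal normal subgroups invoked in the reduction to the abelian case. The only thing that genuinely needs care is arranging the dichotomy so that the ``bad'' alternative feeds exactly into the inductive hypothesis for $\delta_{i-1}$.
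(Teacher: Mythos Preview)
Your proof is correct and follows essentially the same route as the paper's: induction on $i$, with the key dichotomy on whether $\langle N\cap G_{\delta_{i-1}}\rangle$ is trivial or all of $N$, handling the former via the inductive hypothesis for $\delta_{i-1}$ and the latter by observing that each commutator $[a,b]$ with $a\in N\cap G_{\delta_{i-1}}$ and $b\in G_{\delta_{i-1}}$ is a $\delta_i$-value lying in $N$. The paper argues directly rather than by contradiction and does not first reduce to abelian $N$ (the claim that $[N,G^{(i-1)}]$ is generated by such $[a,b]$ holds in general, since both $N\cap G_{\delta_{i-1}}$ and $G_{\delta_{i-1}}$ are normal subsets of $G$), but these are purely cosmetic differences.
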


\begin{proof}
We argue by induction on $i$.
If $i=1$ then, since $N$ is normal in $G$ and does not contain any non-trivial commutators
of elements of $G$, it follows that $[n,g]=1$ for every $n\in N$ and $g\in G$.
Thus $[N,G]=1$, as desired.

Assume now that $i\ge 2$.
The fact that $N$ is a minimal normal subgroup of $G$ implies that the subgroup $\langle N\cap G_{\delta_{i-1}}\rangle$ must be either equal to $N$ or the trivial subgroup. In the former case, we have $N=\langle N\cap G_{\delta_{i-1}}\rangle$ and so $[N,G^{(i-1)}]$ is generated by elements of the form $[a,b]$ where $a\in N\cap G_{\delta_{i-1}}$ and $b\in G_{\delta_{i-1}}$.
In particular, each commutator $[a,b]$ belongs to $N\cap G_{\delta_{i}}$ and must be $1$ by the hypothesis. Hence $[N,G^{(i-1)}]=1$. If  $N\cap G_{\delta_{i-1}}=1$, then it follows from the induction hypothesis that $[N,G^{(i-2)}]=1$, and the result holds.
\end{proof}

We conclude this preliminary section by showing that Theorem A holds
\emph{for every word\/} under the assumption that the verbal subgroup
$w(G)$ is nilpotent.

\begin{thm}
\label{nilpotent case}
Let $G$ be a finite group of order $p^am$, where $p$ is a prime and
$m$ is not divisible by $p$, and let $P$ be a Sylow $p$-subgroup of $G$.
If $w$ is any word such that $w(G)$ is nilpotent, then
\[
P\cap w(G)=\langle P\cap G_{w^m} \rangle.
\]
\end{thm}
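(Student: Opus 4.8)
The plan is to exploit the nilpotency of $w(G)$ to reduce the problem to each Sylow subgroup of $w(G)$ separately, and then to use the fact that all $w$-values lying in a $p$-group are $p$-elements, so Lemma~\ref{intersection mod N} applies. First I would write $L = w(G)$ and note that, since $L$ is nilpotent, $L = \prod_q L_q$ where $L_q$ is the (unique, normal in $G$) Sylow $q$-subgroup of $L$, and $P \cap L = P \cap L_p = L_p$. Moreover $L_p$ is generated by those $w$-values of $G$ that are $p$-elements: indeed, every $w$-value $g$ decomposes as a product of its commuting prime-power components $g = \prod_q g_q$, each of which is again a $w$-value provided $w$ is such that powers of $w$-values are $w$-values — but here we do not need that; we only need that $L_p$ is generated by the $p$-parts of the generators $G_w$, and the $p$-part of a $w$-value is a power of that $w$-value, hence lies in $G_{w^k}$ for a suitable $k$ coprime to $p$.

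The key point is then a power-trick to replace an arbitrary such exponent $k$ by the single exponent $m$. Given a $w$-value $g$ whose $p$-part $g_p$ we want to capture, choose $k$ coprime to $p$ with $g_p = g^k$ (for instance $k$ the $p'$-part of the order of $g$, suitably interpreted, or more simply: since $g_p = g^k$ for $k = |g|_{p'} \cdot t$ with $t$ chosen so that $kt' \equiv$ the right thing $\bmod\ |g|$). Now observe that $g^m$ also has $p$-part equal to a power of $g$, and since $p \nmid m$ the map $x \mapsto x^m$ is a bijection on $\langle g_p \rangle$; hence $g_p$ is a power of $g^m$, i.e. $g_p \in \langle g^m \rangle$. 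Therefore $g_p$ lies in the subgroup generated by $P \cap G_{w^m}$: more precisely, $g_p = (g^m)^j$ for some $j$, and $g_p \in P$ since $g_p \in L_p = P \cap L$, so $g_p$ is itself a power of the element $g^m$, but we must be careful that $g^m$ itself need not lie in $P$. This is the real obstacle and is handled as follows: $g^m$ lies in $L$, its $p$-part $(g^m)_p = g_p$ lies in $P \cap L$, and $g_p$ is a power of $g^m$; now $g^m$ is a $w^m$-value, and applying Lemma~\ref{intersection mod N} with $N = 1$ is vacuous, so instead I would argue directly inside the nilpotent group $L$: the set $G_{w^m} \cap L$ is a normal subset of $G$ consisting of elements of $L$, each of whose $p$-parts lies in $P \cap L$ and is a power of the element itself; since $L$ is nilpotent these $p$-parts generate $L_p = P \cap w(G)$, and each such $p$-part, being a power of a $w^m$-value lying in $P$, belongs to $\langle P \cap G_{w^m}\rangle$ — here one uses that $p \nmid m$ to see $g_p \in \langle g^m\rangle \cap P = \langle (g^m)_p\rangle = \langle g_p\rangle$, so $g_p$ is literally one of the powers of a $w^m$-value, and that power is itself in $P$, hence in $G_{w^m} \cap P$.

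The reverse inclusion $\langle P \cap G_{w^m}\rangle \subseteq P \cap w(G)$ is immediate, since every $w^m$-value lies in $w(G)$. Putting the two inclusions together gives $P \cap w(G) = \langle P \cap G_{w^m}\rangle$. I expect the main obstacle to be the bookkeeping in the previous paragraph: making rigorous the passage from "$g_p$ is a power of $g$ with a $p'$-exponent" to "$g_p$ is a power of the $w^m$-value $g^m$ and that intermediate power already lies in $P$", which is exactly where the hypothesis $p \nmid m$ and the nilpotency of $w(G)$ are both used in an essential way. Everything else is routine, and in particular no deep input (no classification, no Ore's conjecture) is needed here — Theorem~\ref{nilpotent case} is genuinely the easy, self-contained base case of Theorem~A.
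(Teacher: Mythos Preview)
Your approach is the right one and is in spirit identical to the paper's, but you have manufactured an obstacle that is not there, and in trying to work around it you introduce an actual slip. The ``real obstacle'' you flag---that $g^m$ need not lie in $P$---is illusory: since $|g|$ divides $|G|=p^am$, the element $g^m$ has $p$-power order; and since $g^m$ lies in the nilpotent normal subgroup $w(G)$, it therefore lies in the unique Sylow $p$-subgroup $L_p$ of $w(G)$, which is normal in $G$ and hence contained in $P$. Thus $g^m\in P\cap G_{w^m}$ outright, and $g_p\in\langle g^m\rangle\subseteq\langle P\cap G_{w^m}\rangle$ follows immediately. Your attempted workaround instead concludes that $g_p$, being a power of a $w^m$-value and lying in $P$, belongs to $G_{w^m}\cap P$; but a power of a $w^m$-value need not itself be a $w^m$-value for an arbitrary word $w$, so that final step is unjustified as written.

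The paper packages the same idea more crisply via B\'ezout: writing $1=\lambda p^a+\mu m$, every $w$-value $g$ satisfies $g=(g^{p^a})^{\lambda}(g^m)^{\mu}$, so $w(G)=\langle G_{w^{p^a}},G_{w^m}\rangle$. The first set consists of $p'$-elements and the second of $p$-elements; nilpotency of $w(G)$ then forces $w(G)=\langle G_{w^{p^a}}\rangle\times\langle G_{w^m}\rangle$ with the second factor equal to $L_p=P\cap w(G)$. This avoids any talk of $p$-parts of individual elements and makes the equality $P\cap w(G)=\langle G_{w^m}\rangle=\langle P\cap G_{w^m}\rangle$ immediate. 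Once you drop the phantom obstacle, your argument collapses to exactly this.
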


\begin{proof}
By Bezout's identity, there exist two integers $\lambda$ and $\mu$ such that
$1=\lambda p^a+\mu m$.
If we put $u=w^{p^a}$ and $v=w^m$, then for every $g\in G_w$ we have
\[
g=(g^{p^a})^{\lambda} \, \cdot(g^m)^{\mu} \in \langle G_u\rangle \cdot \langle G_v\rangle.
\]
Hence
\begin{equation}
\label{other gens}
w(G) = \langle G_u, G_v \rangle.
\end{equation}
Note that all elements of $G_u$ have $p'$-order, and all elements of $G_v$
have $p$-power order.
Since $w(G)$ is nilpotent, it follows that $\langle G_u \rangle$ is a $p'$-subgroup
of $w(G)$, $\langle G_v \rangle$ is a $p$-subgroup, and $G_{u}$ and $G_{v}$ commute
elementwise.
As a consequence of this and (\ref{other gens}), we get
\begin{equation}
\label{change_gen_set}
w(G) = \langle G_u \rangle \times \langle G_v\rangle,
\end{equation}
and $\langle G_u \rangle$ and $\langle G_v \rangle$ are a Hall $p'$-subgroup
and a Sylow $p$-subgroup of $w(G)$, respectively.
We conclude that $P\cap w(G)=\langle G_v \rangle$, which proves the theorem. 
\end{proof}

\section{The proof of Theorem A}

The first step in the proof of Theorem A is to verify it for $\delta_i$, which is done
in Theorem \ref{deltai} below.
For this, we will rely on the result by Liebeck, O'Brien, Shalev and
Tiep \cite{LOST} that proved  Ore's conjecture, according to which every element of a non-abelian
simple group is a commutator, and \textit{a fortiori\/}, also a value of $\delta_i$
for every $i$.
We will also need the following result of Gasch\"utz (see page 191 of \cite{gaschutz}).

\begin{thm}
\label{gaschutz}
Let $G$ be a finite group, and let $P$ be a Sylow $p$-subgroup of $G$.
If $N$ is a normal abelian p-subgroup of $G$, then $N$ is complemented in $G$ if
and only if $N$ is complemented in $P$.
\end{thm}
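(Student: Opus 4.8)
The plan is to handle the two implications separately; the forward one is immediate, and the converse carries all the substance.

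\emph{The direction ``$N$ complemented in $G$ $\Rightarrow$ $N$ complemented in $P$''.} If $H$ is a complement of $N$ in $G$, I would simply intersect with $P$. Since $N\le P$, Dedekind's law gives $P=P\cap NH=N(P\cap H)$, while $N\cap(P\cap H)\le N\cap H=1$; hence $P\cap H$ is a complement of $N$ in $P$. No coprimeness is needed here, and $N$ need not even be abelian for this half.

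\emph{The converse.} Here I would use the cohomological description of complements, which is available because $N$ is abelian and hence a module over $\overline G:=G/N$. The extension $1\to N\to G\to\overline G\to 1$ has a class $\omega_G\in H^2(\overline G,N)$, and a complement of $N$ in $G$ exists if and only if $\omega_G=0$. Pulling this extension back along the inclusion $\overline P:=P/N\hookrightarrow\overline G$ yields the extension $1\to N\to P\to\overline P\to 1$ (the preimage of $\overline P$ in $G$ is exactly $P$), whose class is $\mathrm{res}^{\,\overline G}_{\overline P}\,\omega_G$; thus $N$ is complemented in $P$ exactly when this restricted class vanishes. So the theorem reduces to showing that $\mathrm{res}\colon H^2(\overline G,N)\to H^2(\overline P,N)$ is injective. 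Note that $\overline P$ is a Sylow $p$-subgroup of $\overline G$, with $[\,\overline G:\overline P\,]=m$ prime to $p$. Since $N$ is a $p$-group, multiplication by $m$ is an automorphism of $N$, hence of $H^2(\overline G,N)$; combining this with the transfer identity $\mathrm{cor}\circ\mathrm{res}=[\,\overline G:\overline P\,]\cdot\mathrm{id}=m\cdot\mathrm{id}$ on $H^2(\overline G,N)$ shows that $\mathrm{res}$ is injective. Therefore $\mathrm{res}\,\omega_G=0$ forces $\omega_G=0$, producing the desired complement in $G$.

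\emph{An elementary alternative, and the main obstacle.} One can avoid cohomology altogether by the averaging argument going back to Gasch\"utz: starting from a complement $Q$ of $N$ in $P$ and a right transversal $S$ of $P$ in $G$, one checks that $SQ$ is a right transversal of $N$ in $G$, extracts the associated factor set $\phi$ with values in $N$, and --- because $|N|$ is a power of $p$ while $[G:P]=m$ is a $p'$-number, so one may ``divide by $m$'' inside $N$ --- averages $\phi$ over the block $S$ to obtain a coboundary correction that turns the transversal into a subgroup. Either way, I expect the only real work to lie in the converse: in the cohomological route, in setting up the bijection between complements of an abelian normal subgroup and triviality of the $H^2$-class, together with the naturality of restriction; in the elementary route, in carrying out the averaging so that the cocycle identity is preserved. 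In both approaches the decisive point is the coprimeness $\gcd(|N|,[G:P])=1$ --- equivalently, that $N$ is a $p$-group whereas $[G:P]$ is prime to $p$ --- which is precisely what legitimizes dividing by $m$ and is the heart of the argument.
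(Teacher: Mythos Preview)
Your proof is correct. Both routes you sketch --- the cohomological one via injectivity of restriction $H^2(\overline G,N)\to H^2(\overline P,N)$ when $[\overline G:\overline P]$ is prime to $|N|$, and the explicit Gasch\"utz averaging over a transversal --- are the standard arguments for this classical result, and your identification of the key point (divisibility by $m=[G:P]$ in the $p$-group $N$) is exactly right.

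The paper, however, does not prove this theorem at all: it simply quotes it as a known result of Gasch\"utz, with a reference to Rotman's textbook. So there is no comparison to make with the paper's own argument; you have supplied a proof where the paper supplies only a citation.
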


In the proof of Theorem A for both $\delta_i$ and an arbitrary outer commutator word,
we will argue by induction.
Then it will be important to take into account the following remark.

\begin{rmk}
Let $G$ be a group of order $p^am$ for which we want to prove Theorem A
in the case of a given word $w$.
Assume that $K$ is a group whose order $p^{a^*}m^*$ is a divisor of $p^am$ (for example,
a subgroup or a quotient of $G$), and let $P^*$ be a Sylow $p$-subgroup of $K$.
If Theorem A is known to hold for $K$ and $w$, then we have
$P^*\cap w(K)=\langle P^*\cap K_{w^{m^*}} \rangle$.
Since $m/m^*$ is a positive integer which is coprime to $p$, it follows that
$P^*\cap w(K)=\langle (P^*\cap K_{w^{m^*}})^{m/m^*} \rangle$, and so also that
$P^*\cap w(K)=\langle P^*\cap K_{w^m} \rangle$.
In other words, in the statement of Theorem A for $K$, we can replace the power
word $w^{m^*}$ corresponding to the order of $K$ with the word $w^m$, which
corresponds to the order of $G$.
\end{rmk}

We can now proceed to the proof of Theorem A for $\delta_i$.

\begin{thm}
\label{deltai}
Let $G$ be a finite group of order $p^am$, where $p$ is a prime and
$m$ is not divisible by $p$, and let $P$ be a Sylow $p$-subgroup of $G$.
Then, for every $i\ge 0$, we have
\[
P\cap G^{(i)}=\langle P\cap G_{\delta_i^m} \rangle.
\]
\end{thm}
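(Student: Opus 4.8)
The plan is to argue by induction on $i$, the case $i=0$ being trivial since $G^{(0)}=G$, $\delta_0=x_1$, and $P\cap G_{\delta_0^m}$ already contains $P$ itself (every element of $P$ is a $p$-element, hence an $m$th power of itself up to an automorphism of $\langle g\rangle$, so in fact $P=\langle P\cap G_{\delta_0^m}\rangle$). For the inductive step, fix $i\ge 1$ and assume the result for $i-1$ and for all groups of order dividing $p^am$. We want $P\cap G^{(i)}=\langle P\cap G_{\delta_i^m}\rangle$; the inclusion $\supseteq$ is automatic, so the content is $\subseteq$. A secondary induction on the group order will reduce us to the case where $G$ has no proper quotient for which Theorem~A fails for $\delta_i$; concretely, pick a minimal normal subgroup $N$ of $G$ and pass to $\overline G=G/N$, where by induction $\overline P\cap \overline G^{(i)}=\langle \overline P\cap \overline G_{\delta_i^m}\rangle$. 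Applying Lemma~\ref{from N to L} with $L=G^{(i)}N$ and $X=G_{\delta_i^m}$ (a normal subset of $p$-elements, since $\delta_i^m$ is a power word whose values have $p$-power order once raised to the $m$th power — actually one must be slightly careful: $\delta_i^m$-values are $m$th powers of $\delta_i$-values, and these need not be $p$-elements, so I would instead take $X$ to be the set of $p$-parts, or restrict attention to $P\cap G_{\delta_i^m}$ directly), we would obtain $P\cap G^{(i)}N=\langle P\cap G_{\delta_i^m}, P\cap N\rangle$. Hence it remains to handle $P\cap N$ and to relate $G^{(i)}$ with $G^{(i)}N$.

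The heart of the matter is therefore the minimal normal subgroup $N$. There are two cases. If $N$ does not contain any non-trivial $\delta_i$-value, then Lemma~\ref{no deltai values} gives $[N,G^{(i-1)}]=1$; combined with the fact that $N$ is either abelian (elementary abelian $p$- or $p'$-group) or a product of isomorphic non-abelian simple groups, we analyse $N\cap G^{(i)}$. If $N$ is a $p'$-group this contributes nothing to $P$; if $N$ is an elementary abelian $p$-group with $[N,G^{(i-1)}]=1$, I expect to show $N\cap G^{(i)}$ is small or complemented, invoking Gasch\"utz's theorem (Theorem~\ref{gaschutz}) to transfer a complement of $N\cap G^{(i)}$ in $G^{(i)}$ down to $P\cap G^{(i)}$, so that $P\cap N\cap G^{(i)}$ lies in the verbal part. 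If, on the other hand, $N$ \emph{does} contain a non-trivial $\delta_i$-value, then since $N$ is minimal normal and $\langle N\cap G_{\delta_i}\rangle$ is normal in $G$, we get $N=\langle N\cap G_{\delta_i}\rangle$; when $N$ is non-abelian, Ore's conjecture (via \cite{LOST}) tells us every element of each simple direct factor is a commutator, indeed a $\delta_i$-value, and a standard argument (e.g.\ using that $\delta_i$-values generating a simple group cover it) shows $N\subseteq G_{\delta_i}$ or at least $N\le G^{(i)}$ is generated by $\delta_i$-values lying in $N$; when $N$ is an elementary abelian $p$-group we directly get $N=\langle N\cap G_{\delta_i}\rangle\le\langle P\cap G_{\delta_i^m}\rangle$ since on a $p$-group the $m$th power map is bijective, so $N\cap G_{\delta_i}$ and $N\cap G_{\delta_i^m}$ generate the same subgroup.

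Putting these together, once we know $P\cap N$ is contained in $\langle P\cap G_{\delta_i^m}\rangle$ (either because $N\le G^{(i)}$ is built from $\delta_i$-values, or because $[N,G^{(i-1)}]=1$ forces the relevant intersection to be complemented and hence absorbed), and once we know $G^{(i)}N\cap P = \langle P\cap G_{\delta_i^m},P\cap N\rangle$ together with $P\cap G^{(i)} = (P\cap G^{(i)}N)\cap G^{(i)}$, a final application of Dedekind's law and the remark preceding this theorem (allowing us to use the word $\delta_i^m$ rather than $\delta_i^{m^*}$ for subgroups and quotients) closes the induction. The main obstacle I anticipate is the non-abelian minimal normal case: one needs to convert the statement ``every element of a simple factor is a $\delta_i$-value'' into ``$N$ itself is generated by $\delta_i$-values of $G$ lying in $N$'' — the $\delta_i$-values must be taken with \emph{all} arguments in $G$, not just in $N$, and one has to ensure the values one produces actually land inside $N$; handling a non-trivial diagonal action of $G$ permuting the simple factors of $N$ is the delicate point, and is presumably where the outer-commutator structure of $\delta_i$ (so that one can substitute factor-supported arguments) is used essentially.
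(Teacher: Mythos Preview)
Your overall architecture---induction, pass to a minimal normal subgroup $N$, apply Lemma~\ref{from N to L}, then treat $P\cap N$ by cases---matches the paper's, but several details are off and the genuinely hard case is left as a hope rather than an argument.

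First, two minor corrections. Your worry that $G_{\delta_i^m}$ might not consist of $p$-elements is unfounded: for any $g\in G$ we have $(g^m)^{p^a}=g^{|G|}=1$, so every $m$th power is a $p$-element and Lemma~\ref{from N to L} applies directly with $X=G_{\delta_i^m}$. Second, you should choose $N$ to be a minimal normal subgroup of $G$ \emph{contained in $G^{(i)}$} (such exists since $G^{(i)}\ne 1$); then $L=G^{(i)}$ and you avoid the detour through $G^{(i)}N$. This is what the paper does.

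More importantly, you have misidentified the main obstacle. The non-abelian minimal normal case is the \emph{easy} one: if $N=S_1\times\cdots\times S_k$ with each $S_j$ non-abelian simple, then by Ore every element of each $S_j$ is a $\delta_i$-value in $S_j$, and since $\delta_i$ can be evaluated coordinatewise in a direct product, every element of $N$ is a $\delta_i$-value in $N$ (hence in $G$). There is no issue with the $G$-action permuting the factors; the values are produced entirely inside $N$. Then the $m$th power bijection on the $p$-group $P\cap N$ gives $P\cap N\subseteq P\cap G_{\delta_i^m}$ immediately.

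The genuine gap is the case where $N$ is elementary abelian of order a power of $p$ and $N\cap G_{\delta_i}=1$, so that Lemma~\ref{no deltai values} gives $[N,G^{(i-1)}]=1$. Your plan here (``I expect to show $N\cap G^{(i)}$ is small or complemented, invoking Gasch\"utz's theorem'') is not an argument, and in fact the paper does something rather different. One splits on whether $G$ is perfect. If $G$ is not perfect, the induction on $|G|$ applies to $G'$, giving the theorem for $\delta_i$ in $G'$ and hence that $P\cap G^{(i+1)}$ is generated by $\delta_i^m$-values; one then \emph{replaces} $N$ by $G^{(i+1)}$ in equation~(\ref{P intersect Gi}) (or notes that $G^{(i)}$ is abelian if $G^{(i+1)}=1$, whence Theorem~\ref{nilpotent case} finishes). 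If $G$ is perfect, then $[N,G]=[N,G^{(i-1)}]=1$ forces $N\le Z(G)$ and $|N|=p$; now either $N\le\Phi(P)$, in which case $N$ is redundant among the generators in (\ref{P intersect Gi}), or $N$ has a complement in $P$, whence by Gasch\"utz a complement in $G$, contradicting perfectness since then $G=H\times N$. Your sketch does not contain the non-perfect branch at all, and this is where most of the work lies.
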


\begin{proof}
We argue by induction on the order of $G$.
The result is obvious if either $i=0$ or $G^{(i)}=1$, so we assume
that $i\ge 1$ and $G^{(i)}\ne 1$.

Let $N\ne 1$ be a normal subgroup of $G$ which is contained in
$G^{(i)}$.
Then the result holds in $\overline G=G/N$, and we have
$\overline P\cap \overline G^{(i)}
=\langle \overline P\cap \overline G_{\delta_i^m} \rangle$.
By applying Lemma \ref{from N to L}, we get
\begin{equation}
\label{P intersect Gi}
P\cap G^{(i)} = \langle P\cap G_{\delta_i^m}, P\cap N \rangle.
\end{equation}
Now we assume that $N$ is a minimal normal subgroup of $G$, and we consider
three different cases, depending on the structure of $N$.

\vspace{3pt}

\noindent
(i)
$N$ is a direct product of isomorphic non-abelian simple groups.

\vspace{3pt}

By the positive solution to Ore's conjecture, we have $N=N_{\delta_i}$.
Hence we get $P\cap N\subseteq N_{\delta_i}$, and since the map $z\mapsto z^m$
is a bijection in $P\cap N$, it follows that
$P\cap N\subseteq P\cap N_{\delta_i^m}$.
Now the result is immediate from (\ref{P intersect Gi}).

\vspace{3pt}

\noindent
(ii)
$N\cong C_q\times \cdots \times C_q$, where $q$ is a prime different from $p$.

\vspace{3pt}

In this case, $P\cap N=1$ and the result obviously holds.

\vspace{3pt}

\noindent
(iii)
$N\cong C_p\times \cdots \times C_p$.

\vspace{3pt}

In this case, we have $N\le P$ and so $P\cap N=N$.
Since $\langle N\cap G_{\delta_i} \rangle$ is a normal subgroup of $G$ and $N$
is a minimal normal subgroup, it follows that  either
$\langle N\cap G_{\delta_i} \rangle=N$ or $N\cap G_{\delta_i}=1$.
In the former case, we have $N=\langle (N\cap G_{\delta_i})^m \rangle$,
since $N$ is a finite $p$-group, and so $N=\langle N\cap G_{\delta_i^m} \rangle$
and the theorem follows again from (\ref{P intersect Gi}).
So we are necessarily in the latter case, and then by Lemma \ref{no deltai values},
we have $[N,G^{(i-1)}]=1$.

If $G$ is not perfect, then the theorem holds by induction in $G'$, and so
$P\cap G^{(i+1)}=P\cap (G')^{(i)}$ can be generated by values of $\delta_i^m$
lying in $P$.
If $G^{(i+1)}\ne 1$ then we can use (\ref{P intersect Gi}) with $G^{(i+1)}$
in the place of $N$, and we are done.
On the other hand, if $G^{(i+1)}=1$ then $G^{(i)}$ is abelian, and the result
is a consequence of Theorem \ref{nilpotent case}.

Thus we may assume that $G$ is perfect.
Then $P\cap G^{(i)}=P$.
Also $[N,G]=[N,G^{(i-1)}]=1$, and $N$ is central in $G$.
Being a minimal normal subgroup of $G$, this implies that $|N|=p$.
If $N\le \Phi(P)$ then it follows from (\ref{P intersect Gi}) that
$P=\langle P\cap G_{\delta_i^m} \rangle$, as desired.
Hence we may assume that $N$ is not contained in a maximal subgroup $M$
of $P$.
Since $|N|=p$, it follows that $M$ is a complement of $N$ in $P$.
By Theorem \ref{gaschutz}, it follows that $N$ has also a complement in $G$,
say $H$.
Since $N\le Z(G)$, we conclude that $G=H\times N$, a contradiction with the fact
that $G$ is perfect.
This completes the proof.
\end{proof}

\vspace{-0,01cm}

We will deal with arbitrary outer commutator words using some concepts from
the paper \cite{fernandez-morigi}, where outer commutator words are represented  by binary rooted trees in the following way: indeterminates are represented by an isolated vertex, and if $w=[u,v]$ is the commutator of two outer commutator words $u$ and $v$, then the tree $T_w$ of $w$ is obtained by drawing the trees $T_u$ and $T_v$, and a new vertex (which will be the root of the new tree)
which is then connected to the roots of $T_u$ and $T_v$.
For example, the following are the trees for the words $\gamma_4$ and $\delta_3$
(we also label every vertex with the outer commutator word which is represented by the tree appearing on top of that vertex):
\begin{figure}[H]
\centering
\begin{tikzpicture}[level distance=5mm]
\tikzstyle{level 1}=[sibling distance=10mm]
\tikzstyle{level 2}=[sibling distance=10mm]
\tikzstyle{level 3}=[sibling distance=10mm]
\coordinate (root)[grow=up, fill] circle (2pt)
child {[fill] circle (2pt)}
child {[fill] circle (2pt)
       child {[fill] circle (2pt)}
       child {[fill] circle (2pt)
               child {[fill] circle (2pt)}
               child {[fill] circle (2pt)}
             }
      };
\node[below=2pt] at (root) {\scriptsize $\gamma_4$};
\node[left=4pt] at (root-2) {\scriptsize $[x_1,x_2,x_3]$};
\node[right=2pt] at (root-1) {\scriptsize $x_4$};
\node[left=2pt] at (root-2-2) {\scriptsize $[x_1,x_2]$};
\node[right=2pt] at (root-2-1) {\scriptsize $x_3$};
\node[above=2pt] at (root-2-2-2) {\scriptsize $x_1$};
\node[above=2pt] at (root-2-2-1) {\scriptsize $x_2$};
\end{tikzpicture}
\qquad
\begin{tikzpicture}[level distance=5mm]
\tikzstyle{level 1}=[sibling distance=26mm]
\tikzstyle{level 2}=[sibling distance=15mm]
\tikzstyle{level 3}=[sibling distance=5mm]
\coordinate (root)[grow=up, fill] circle (2pt)
child {[fill] circle (2pt)
       child {[fill] circle (2pt)
               child {[fill] circle (2pt)}
               child {[fill] circle (2pt)}
             }
       child {[fill] circle (2pt)
               child {[fill] circle (2pt)}
               child {[fill] circle (2pt)}
             }
      }
child {[fill] circle (2pt)
       child {[fill] circle (2pt)
               child {[fill] circle (2pt)}
               child {[fill] circle (2pt)}
             }
       child {[fill] circle (2pt)
               child {[fill] circle (2pt)}
               child {[fill] circle (2pt)}
             }
      };
\node[below=2pt] at (root) {\scriptsize $\delta_3$};
\node[left=4pt] at (root-2) {\scriptsize $[[x_1,x_2],[x_3,x_4]]$};
\node[right=3pt] at (root-1) {\scriptsize $[[x_5,x_6],[x_7,x_8]]$};
\node[left=2pt] at (root-2-2) {\scriptsize $[x_1,x_2]$};
\node[left=3pt] at (root-2-1) {\scriptsize $[x_3,x_4]$};
\node[right=3pt] at (root-1-2) {\scriptsize $[x_5,x_6]$};
\node[right=2pt] at (root-1-1) {\scriptsize $[x_7,x_8]$};
\node[above=2pt] at (root-2-2-2) {\scriptsize $x_1$};
\node[above=2pt] at (root-2-2-1) {\scriptsize $x_2$};
\node[above=2pt] at (root-2-1-2) {\scriptsize $x_3$};
\node[above=2pt] at (root-2-1-1) {\scriptsize $x_4$};
\node[above=2pt] at (root-1-2-2) {\scriptsize $x_5$};
\node[above=2pt] at (root-1-2-1) {\scriptsize $x_6$};
\node[above=2pt] at (root-1-1-2) {\scriptsize $x_7$};
\node[above=2pt] at (root-1-1-1) {\scriptsize $x_8$};
\end{tikzpicture}
\caption{The trees of the words $\gamma_4$ and $\delta_3$.}
\end{figure}
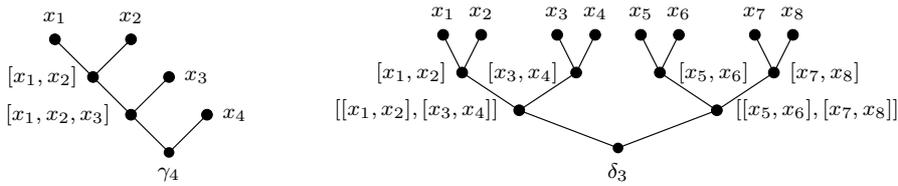
Each of these trees has a visual height, which coincides with the largest
distance from the root to another vertex of the tree.
Observe that the full binary tree of height $i$ corresponds to the derived word
$\delta_i$.
The following two concepts, also introduced in \cite{fernandez-morigi}, will
be essential in our proof of Theorem A.

\begin{dfn}
Let $u$ and $w$ be two outer commutator words.
We say that $u$ is an \emph{extension\/} of $w$ if the tree of $u$ is an
upward extension of the tree of $w$.
If $u\ne w$, we say that $u$ is a \emph{proper extension\/} of $w$.
\end{dfn}

An important remark is that, if $u$ is an extension of $w$, then $G_u\subseteq G_w$.

\begin{dfn}
If $w$ is an outer commutator word whose tree has height $i$, the \emph{defect\/}
of $w$ is the number of vertices that need to be added to the tree of $w$ in
order to get the tree of $\delta_i$.
In other words, if the tree of $w$ has $V$ vertices, the defect of $w$ is
$2^{i+1}-1-V$.
\end{dfn}

Thus the only words of defect $0$ are the derived words.
Our proof of Theorem A also depends on the following result, which is
implicit in the proof of Theorem B of \cite{fernandez-morigi}.

\begin{thm}
\label{product extensions}
Let $w=[u,v]$ be an outer commutator word of height $i$, different from $\delta_i$.
Then at least one of the subgroups $[w(G),u(G)]$ and $[w(G),v(G)]$ is contained
in a product of verbal subgroups corresponding to words which are proper
extensions of $w$ of height $i$.
\end{thm}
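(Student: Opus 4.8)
The plan is to reduce the statement to a computation with commutators of verbal subgroups and then to run an induction driven by the tree combinatorics of \cite{fernandez-morigi}. I write $U=u(G)$, $V=v(G)$ and $W=w(G)$; these are normal (indeed fully invariant) subgroups of $G$. First I would record the basic identities for outer commutator words: since $w=[u,v]$ uses disjoint indeterminates for $u$ and $v$, a standard commutator-collection argument gives $W=[U,V]$, and in particular $W\le U\cap V$. I would also use the monotone bound $W\le G^{(d)}$, where $d=d(w)$ is the ``derived height'' of $w$, defined by $d(x)=0$ on a leaf and $d([s,t])=\min(d(s),d(t))+1$; here $d(w)\le i$ always, with equality if and only if $w=\delta_i$, so the hypothesis $w\ne\delta_i$ yields $d(w)\le i-1$. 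Finally, by the remark in the excerpt, every proper extension $w'$ of $w$ satisfies $G_{w'}\subseteq G_w$, hence $w'(G)\le W$, and a word $[c,b]$ is a proper extension of $w=[u,v]$ of height $i$ exactly when, after suitably naming the branches, one branch is replaced by a proper, height-preserving upward extension.

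The core mechanism I would isolate is the following height-preserving substitution. Writing $w=[c,b]$ with $\{b,c\}=\{u,v\}$, one has $[W,b(G)]\le[G^{(d)},b(G)]=[\delta_d,b](G)$, using $W\le G^{(d)}=\delta_d(G)$ and the monotonicity of the commutator. Provided that $\delta_d$ is an \emph{extension} of the complementary branch $c$ (equivalently $\operatorname{height}(c)\le d$) and that $\max(d,\operatorname{height}(b))=i-1$, the word $[\delta_d,b]$ is a proper extension of $w$ of height exactly $i$, so in this case $[W,b(G)]$ is contained in a single such verbal subgroup. I would check that this already settles a range of configurations in which one branch is sufficiently shallow---for instance $w=\gamma_i$, where one takes $b$ to be the taller branch. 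The choice of which of $u,v$ plays the role of $b$ is exactly the freedom granted by the ``at least one of'' in the statement.

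For the remaining configurations---essentially those in which both branches are tall and far from balanced, so that neither choice makes $\delta_d$ an extension of the opposite branch---I would argue by induction on the defect of $w$ (equivalently on the combinatorial complexity of $T_w$), following the scheme that is implicit in the proof of Theorem B of \cite{fernandez-morigi}. The inductive step works at the level of generators: a generator of $[W,b(G)]$ has the form $[[u(\mathbf a),v(\mathbf b)],b(\mathbf c)]$, and I would expand it by the Hall--Witt identity and the three subgroups lemma, decomposing the factor $b(\mathbf c)$ along the outer-commutator structure of $b$ (so that $[W,[b_1(G),b_2(G)]]\le[[W,b_1(G)],b_2(G)]\,[[W,b_2(G)],b_1(G)]$ when $b=[b_1,b_2]$), so as to rewrite this element as a product of commutators that are values of strictly deeper outer commutator words. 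The combinatorial bookkeeping of \cite{fernandez-morigi}---matching each such rewriting to an upward extension of the tree $T_w$---then identifies the words produced as proper extensions of $w$.

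The step I expect to be the main obstacle is keeping the height fixed at $i$. Each commutator manipulation naively increases the height of the words involved to $i+1$, so the whole argument hinges on showing that the extra commutation can be absorbed into a \emph{shallow leaf} of $T_w$, one sitting at depth strictly less than $i$; such a leaf is available precisely because $w\ne\delta_i$. Making this absorption precise, and simultaneously verifying both that the resulting words are genuine upward extensions of $T_w$ and that they still have height exactly $i$, is the technical heart of the proof; it is also what forces the correct choice of branch $b$, and hence explains why only \emph{one} of $[w(G),u(G)]$ and $[w(G),v(G)]$ is asserted to have the desired containment.
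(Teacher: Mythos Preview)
The paper does not supply a proof of this theorem: it is stated as a result ``which is implicit in the proof of Theorem~B of \cite{fernandez-morigi}'', and the text then proceeds immediately to the proof of Theorem~A. So there is no in-paper argument against which to compare your attempt; the paper's ``proof'' is the citation.

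Your sketch has the right shape---the argument in \cite{fernandez-morigi} does run by iterated use of the three subgroups lemma together with the defect/extension combinatorics---but as written it is a plan rather than a proof, and the two places where it is vague are exactly where the work lies. First, the ``core mechanism'' $[W,b(G)]\le[\delta_d,b](G)$ yields a proper extension of $w$ of height $i$ only under several simultaneous constraints (height of $c$ at most $d$, $c\ne\delta_d$, and $\max(d,\operatorname{height}(b))=i-1$), and in many configurations neither choice of $c$ satisfies them: for example, if $w=[\gamma_k(x_1,\ldots,x_k),\gamma_k(x_{k+1},\ldots,x_{2k})]$ with $k\ge 4$ then $d(w)=2$ while both branches have height $k-1\ge 3$, so your base case is narrower than you suggest. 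Second, in the inductive step you invoke $[W,[b_1(G),b_2(G)]]\le[[W,b_1(G)],b_2(G)]\cdot[[W,b_2(G)],b_1(G)]$, but you do not explain why the resulting commutator words have smaller defect (so that the induction applies) nor how the extra depth is absorbed into a shallow leaf of $T_w$ so that the height stays equal to $i$. You yourself flag the height-preservation issue as the main obstacle without resolving it; that absorption step, carried out with a careful case analysis on which subtree contains a leaf at depth less than $i$, is precisely the technical core of the argument in \cite{fernandez-morigi}, and it cannot be waved through.
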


Let us now give the proof of Theorem A.

\begin{proof}[Proof of Theorem A]
We argue by induction on the defect of the word $w$.
If the defect is $0$, then $w$ is a derived word, and the result is true
by Theorem \ref{deltai}.
Hence we may assume that the defect is positive.
If the height of $w$ is $i$, let $\Phi=\{\varphi_1,\ldots,\varphi_r\}$ be the set
of all outer commutator words of height $i$ which are a proper extension of $w$.
Since every word in the set $\Phi$ has smaller defect than $w$, the theorem holds for all
$\varphi_i$.

Put $N_0=1$, $N_i=\varphi_1(G)\ldots \varphi_i(G)$ for $1\le i\le r$, and $N=N_r$.
Let us write $w=[u,v]$, where $u$ and $v$ are outer commutator words.
Since $[w(G),w(G)]$ is contained in both $[w(G),u(G)]$ and $[w(G),v(G)]$, it
follows from Theorem \ref{product extensions} that $[w(G),w(G)]\le N$.
Thus if $\overline G=G/N$, the verbal subgroup $w(\overline G)$ is abelian, and
so Theorem A holds in $\overline G$, according to Theorem \ref{nilpotent case}.
Hence $\overline P\cap w(\overline G) = \langle \overline P \cap \overline{G}_{w^m} \rangle$,
and by applying Lemma \ref{from N to L}, we get
$P\cap w(G)=\langle P\cap G_{w^m}, P\cap N \rangle$.

Consequently, it suffices to show that $P\cap N$ can be generated by values
of $w^m$.
We see this by proving that
$P\cap N_i=\langle P\cap N_i\cap G_{w^m} \rangle$ for every $i=0,\ldots,r$,
by induction on $i$.
There is nothing to prove if $i=0$, so we assume that $i\ge 1$.
If $\overline G=G/N_{i-1}$, we have $\overline N_i=\varphi_i(\overline G)$.
Since the theorem is true for $\varphi_i$, it follows that
$\overline P\cap \overline N_i = \langle \overline P \cap \overline{G}_{\varphi_i^m} \rangle$.
By Lemma \ref{from N to L}, we get
\[
P\cap N_i = \langle P\cap G_{\varphi_i^m}, P\cap N_{i-1} \rangle.
\]
Observe that, since $\varphi_i$ is an extension of $w$, we have
$G_{\varphi_i^m}\subseteq G_{w^m}$.
Since also $G_{\varphi_i^m}\subseteq \varphi_i(G)\le N_i$, we can further say that
$G_{\varphi_i^m}\subseteq N_i\cap G_{w^m}$.
Hence
\[
P\cap N_i = \langle P\cap N_i\cap G_{w^m}, P\cap N_{i-1} \rangle,
\]
and the result follows from the induction hypothesis.
\end{proof}


\begin{thebibliography}{99}

\bibitem{accshu} C. Acciarri, P. Shumyatsky, Fixed points of coprime operator groups,  {\it J. Algebra}  \textbf{342} (2011), 161--174.

\bibitem{fernandez-morigi} 
G.A. Fern\'andez-Alcober, M. Morigi, Outer commutator words are uniformly concise, \textit{J. London Math. Soc.} \textbf{82} (2010), 581--595.
 
\bibitem{gore} D. Gorenstein, {\it Finite Groups},  Chelsea Publishing Company, New York, 1980.

\bibitem{gushu} R. Guralnick, P. Shumyatsky, Derived subgroups of fixed points,  {\it  Israel J. Math.}  \textbf{126} (2001), 345--362.
\bibitem{higman}
G. Higman,  Focal series in finite groups, \textit{Canad. J. Math.\/} \textbf{5} (1953), 477--497.

\bibitem{LOST} 
M.W. Liebeck, E.A. O'Brien, A. Shalev, P.H. Tiep,  The Ore conjecture. \textit{J. Eur. Math. Soc.}, \textbf{12}(4) (2010), 939--1008.

\bibitem{rose} 
J.S. Rose,  \textit{A Course in Group Theory}, Dover Publications, New York, 1994.  

\bibitem{gaschutz} 
J.J. Rotman,  \textit{An Introduction to the Theory of Groups}, 4th ed., Springer-Verlag, 1995.  

 \end{thebibliography}
\end{document}